\title{Computing system signatures through reliability functions}
\author{Jean-Luc Marichal}
\address{Mathematics Research Unit, FSTC, University of Luxembourg, 6, rue Coudenhove-Kalergi, L-1359 Luxembourg, Luxembourg}
\email{jean-luc.marichal[at]uni.lu}
\author{Pierre Mathonet}
\address{University of Li\`ege, Department of Mathematics, Grande Traverse, 12 - B37, B-4000 Li\`ege, Belgium}
\email{p.mathonet[at]ulg.ac.be }
\date{September 14, 2012}
\begin{document}

\theoremstyle{plain}
\newtheorem{theorem}{Theorem}
\newtheorem{lemma}[theorem]{Lemma}
\newtheorem{proposition}[theorem]{Proposition}
\newtheorem{corollary}[theorem]{Corollary}
\newtheorem{fact}[theorem]{Fact}
\newtheorem*{main}{Main Theorem}

\theoremstyle{definition}
\newtheorem{definition}[theorem]{Definition}
\newtheorem{example}[theorem]{Example}

\theoremstyle{remark}
\newtheorem*{conjecture}{Conjecture}
\newtheorem{remark}{Remark}
\newtheorem{claim}{Claim}

\newcommand{\N}{\mathbb{N}}                     
\newcommand{\R}{\mathbb{R}}                     
\newcommand{\Vspace}{\vspace{2ex}}                  
\newcommand{\bfx}{\mathbf{x}}

\begin{abstract}
It is known that the Barlow-Proschan index of a system with i.i.d.\ component lifetimes coincides with the Shapley value, a concept introduced
earlier in cooperative game theory. Due to a result by Owen, this index can be computed efficiently by integrating the first derivatives of the
reliability function of the system along the main diagonal of the unit hypercube. The Samaniego signature of such a system is another important
index that can be computed for instance by Boland's formula, which requires the knowledge of every value of the associated structure function.
We show how the signature can be computed more efficiently from the diagonal section of the reliability function via derivatives. We then apply
our method to the computation of signatures for systems partitioned into disjoint modules with known signatures.
\end{abstract}

\keywords{System signature; reliability function; modular decomposition}

\subjclass[2010]{60K10, 62N05, 90B25}

\maketitle

\section{Introduction}

Consider an $n$-component system $([n],\phi)$, where $[n]=\{1,\ldots,n\}$ is the set of its components and $\phi\colon\{0,1\}^n\to\{0,1\}$ is its structure
function (which expresses the state of the system in terms of the states of its components). We assume that the system is semicoherent, which
means that $\phi$ is nondecreasing in each variable and satisfies the conditions $\phi(0,\ldots,0)=0$ and $\phi(1,\ldots,1)=1$. We also assume, unless otherwise stated, that the components have continuous and i.i.d.\ lifetimes $T_1,\ldots,T_n$.

Barlow and Proschan \cite{BarPro75} introduced in 1975 an index which measures an importance degree for each component. This index is defined by
the $n$-tuple $\mathbf{I}_{\mathrm{BP}}$ whose $k$th coordinate ($k\in [n]$) is the probability that the failure of component $k$
causes the system to fail; that is,
$$
I_{\mathrm{BP}}^{(k)} ~=~ \Pr(T_S=T_k)\, ,
$$
where $T_S$ denotes the system lifetime. For continuous i.i.d.\ component lifetimes, this index reduces to the Shapley value \cite{Sha53,ShaShu54}, a concept
introduced earlier in cooperative game theory. In terms of the values $\phi(A)$ $(A\subseteq [n])$ of the structure function,\footnote{As usual,
we identify Boolean vectors $\bfx\in\{0,1\}^n$ and subsets $A\subseteq [n]$ by setting $x_i=1$ if and only if $i\in A$. We thus use the same
symbol to denote both a function $f\colon\{0,1\}^n\to\R$ and its corresponding set function $f\colon 2^{[n]}\to\R$, interchangeably.} the
probability $I_{\mathrm{BP}}^{(k)}$ then takes the form
\begin{equation}\label{eq:asad67}
I_{\mathrm{BP}}^{(k)}~=~\sum_{A\subseteq [n]\setminus\{k\}}\frac{1}{n\,{n-1\choose |A|}}\,\big(\phi(A\cup\{k\})-\phi(A)\big)\, .
\end{equation}

The concept of \emph{signature}, which reveals a strong analogy with that of Barlow-Proschan index above (see \cite{MarMat} for a recent
comparative study), was introduced in 1985 by Samaniego \cite{Sam85,Sam07} as a useful tool for the analysis of theoretical behaviors of
systems. The system signature is defined by the $n$-tuple $\mathbf{s}$ whose $k$th coordinate $s_k$ is the probability that the $k$th component
failure causes the system to fail. That is,
$$
s_k ~=~ \Pr(T_S=T_{k:n})\, ,
$$
where $T_{k:n}$ denotes the $k$th smallest lifetime, i.e., the $k$th order statistic obtained by rearranging the variables $T_1,\ldots,T_n$ in
ascending order of magnitude.

Boland \cite{Bol01} showed that $s_k$ can be explicitly written in the form
\begin{equation}\label{eq:asad678}
s_k~=~\frac{1}{{n\choose n-k+1}}\,\sum_{\textstyle{A\subseteq [n]\atop |A|=n-k+1}}\phi(A)-\frac{1}{{n\choose n-k}}\,\sum_{\textstyle{A\subseteq
[n]\atop |A|=n-k}}\phi(A)\, .
\end{equation}

Thus, just as for the Barlow-Proschan index, the signature does not depend on the distribution of the variables $T_1,\ldots,T_n$ but only on the
structure function.

The computation of $I_{\mathrm{BP}}^{(k)}$ by means of (\ref{eq:asad67}) may be cumbersome and tedious since it requires the evaluation of
$\phi(A)$ for every $A\subseteq [n]$. To overcome this issue, Owen~\cite{Owe72,Owe88} proposed to compute the right-hand expression in (\ref{eq:asad67}) only from the
expression of $\phi$ as a multilinear polynomial function as follows.

As a Boolean function, $\phi$ can always be put in the unique multilinear form (i.e., of degree at most one in each variable)
$$
\phi(\bfx) ~=~ \sum_{A\subseteq [n]}c(A)\,\prod_{i\in A}x_i\, ,
$$
where the link between the coefficients $c(A)$ and the values $\phi(A)$ is given through the conversion formulas (M\"obius inversion)
$$
c(A)~=~\sum_{B\subseteq A}(-1)^{|A|-|B|}\, \phi(B)\qquad\mbox{and}\qquad \phi(A)~=~\sum_{B\subseteq A}c(B)\, .
$$
Owen introduced the \emph{multilinear extension} of $\phi$ as the multilinear polynomial function $\hat{\phi}\colon [0,1]^n\to\R$ defined by
$$
\hat{\phi}(\bfx) ~=~ \sum_{A\subseteq [n]}c(A)\,\prod_{i\in A}x_i\, .
$$

\begin{example}
The structure of a system consisting of two components connected in parallel is given by
$$
\phi(x_1,x_2) ~=~ \max(x_1,x_2)  ~=~ x_1\amalg x_2  ~=~ x_1+x_2-x_1\, x_2{\,},
$$
where $\amalg$ is the (associative) coproduct operation defined by $x\amalg y=1-(1-x)(1-y)$. Considering only the multilinear expression of $\phi$, we immediately obtain the corresponding multilinear extension $\hat{\phi}(x_1,x_2)=x_1+x_2-x_1\, x_2$.
\end{example}

In reliability analysis the function $\hat{\phi}$, denoted by $h$, is referred to as the \emph{reliability function} of the structure $\phi$
(see \cite[Chap.~2]{BarPro81}; see also \cite[Section 3.2]{Ram90} for a recent reference). This is due to the fact that, under the i.i.d.\ assumption, we have
\begin{equation}\label{eq:sf6f}
\overline{F}_S(t)~=~h\big(\overline{F}_1(t),\ldots,\overline{F}_n(t)\big)\, ,
\end{equation}
where $\overline{F}_S(t)=\Pr(T_S>t)$ is the reliability of the system and $\overline{F}_k(t)=\Pr(T_k>t)$ is the reliability of component $k$ at
time $t$.

We henceforth denote the function $\hat{\phi}$ by $h$. Also, for any function $f$ of $n$ variables, we denote its diagonal section
$f(x,\ldots,x)$ simply by $f(x)$.

Owen then observed that the $k$th coordinate of the Shapley value, and hence the $k$th coordinate of the Barlow-Proschan index, is also given by
\begin{equation}\label{eq:sd5f}
I_{\mathrm{BP}}^{(k)} ~=~ \int_0^1(\partial_k\hat{\phi})(x)\, dx ~=~ \int_0^1(\partial_kh)(x)\, dx\, .
\end{equation}
That is, $I_{\mathrm{BP}}^{(k)}$ is obtained by integrating over $[0,1]$ the diagonal section of the $k$th partial derivative of $h$.

Thus, formula (\ref{eq:sd5f}) provides a simple way to compute $I_{\mathrm{BP}}^{(k)}$ from the reliability function $h$ (at least simpler than the use of
(\ref{eq:asad67})).

\begin{example}\label{ex:5s7d6f}
Consider the bridge structure as indicated in Figure~\ref{fig:bs}. The corresponding structure function and its reliability function are respectively given by
$$
\phi(x_1,\ldots,x_5) ~=~ x_1\, x_4\amalg x_2\, x_5\amalg x_1\, x_3\, x_5\amalg x_2\, x_3\, x_4
$$
and
\begin{eqnarray*}
h(x_1,\ldots,x_5) &=& x_1 x_4 + x_2 x_5 + x_1 x_3 x_5 + x_2 x_3 x_4 \\
&& \null - x_1 x_2 x_3 x_4 - x_1 x_2 x_3 x_5 - x_1 x_2 x_4 x_5  - x_1 x_3 x_4 x_5 - x_2 x_3 x_4 x_5 + 2\, x_1 x_2 x_3 x_4 x_5\, .
\end{eqnarray*}
By using (\ref{eq:sd5f}) we obtain
$\mathbf{I}_{\mathrm{BP}}=\big(\frac{7}{30},\frac{7}{30},\frac{1}{15},\frac{7}{30},\frac{7}{30}\big)$. Indeed, we have for instance
$$
I_{\mathrm{BP}}^{(3)} ~=~  \int_0^1(\partial_3h)(x)\, dx ~=~ \int_0^1(2x^2-4x^3+2x^4)\, dx\, ~=~ \textstyle{\frac{1}{15}}{\,}.
$$\qed
\end{example}

\setlength{\unitlength}{4ex}
\begin{figure}[htbp]\centering
\begin{picture}(11,4)
\put(3,0.5){\framebox(1,1){$2$}} \put(3,2.5){\framebox(1,1){$1$}} \put(5,1.5){\framebox(1,1){$3$}} \put(7,0.5){\framebox(1,1){$5$}}
\put(7,2.5){\framebox(1,1){$4$}}%
\put(0,2){\line(1,0){1.5}}\put(1.5,2){\line(2,-1){1.5}}\put(5.5,0){\line(-2,1){1.5}}\put(1.5,2){\line(2,1){1.5}}\put(5.5,4){\line(-2,-1){1.5}}%
\put(0,2){\circle*{0.15}}%
\put(9.5,2){\line(1,0){1.5}}\put(5.5,0){\line(2,1){1.5}}\put(9.5,2){\line(-2,-1){1.5}}\put(5.5,4){\line(2,-1){1.5}}\put(9.5,2){\line(-2,1){1.5}}%
\put(11,2){\circle*{0.15}}%
\put(5.5,0){\line(0,1){1.5}}\put(5.5,4){\line(0,-1){1.5}}
\end{picture}
\caption{Bridge structure} \label{fig:bs}
\end{figure}
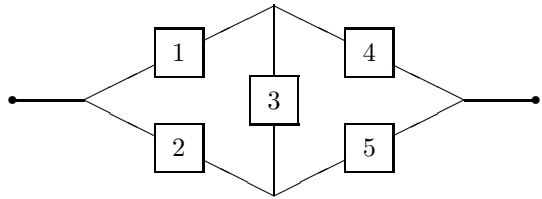

\begin{remark}
Example~\ref{ex:5s7d6f} illustrates the fact that the reliability function $h$ can be easily obtained from the minimal path sets\footnote{Recall that a subset $P\subseteq [n]$ of components is a \emph{path set} for the function $\phi$ if $\phi(P)=1$. A path set $P\subseteq [n]$ is said to be \emph{minimal} if it does not strictly contain another path set.} of the system simply by expanding the coproduct in $\phi$ and simplifying the resulting algebraic expression (using $x_i^2=x_i$).
\end{remark}

Similarly to Owen's method, in this note we provide a way to compute the signature of the system only from the reliability function of the structure,
thus avoiding Boland's formula (\ref{eq:asad678}) which requires the evaluation of $\phi(A)$ for every $A\subseteq [n]$.

Specifically, considering the \emph{tail signature} of the system, that is, the $(n+1)$-tuple
$\overline{\mathbf{S}}=(\overline{S}_0,\ldots,\overline{S}_n)$ defined by (see (\ref{eq:asad678}))
\begin{equation}\label{6w7er5}
\overline{S}_k ~=~ \sum_{i=k+1}^ns_i ~=~ \frac{1}{{n\choose n-k}}\,\sum_{|A|=n-k}\phi(A)\, ,
\end{equation}
we prove (see Theorem~\ref{thm:main} below) that the coefficient of $(x-1)^k$ in the Taylor expansion about $x=1$ of the polynomial
$$
p(x) ~=~ x^n h(1/x)
$$
(which is the $n$-reflected of the univariate polynomial $h(x)$) is exactly ${n\choose k}\,\overline{S}_k$.\footnote{Equivalently, ${n\choose
k}\,\overline{S}_k$ is the coefficient of $x^k$ in $p(x+1)$.}  In other terms, we have
\begin{equation}\label{eq:ds56}
\overline{S}_k ~=~ \frac{(n-k)!}{n!}\, D^kp(1)\, ,\qquad k=0,\ldots,n\, ,
\end{equation}
and the signature can be computed by
\begin{equation}\label{eq:ds56x}
s_k ~=~ \overline{S}_{k-1}-\overline{S}_k\, ,\qquad k=1,\ldots,n\, .
\end{equation}

Even though such a computation can be easily performed by hand for small $n$, a computer algebra system can be of great assistance for large
$n$.

\begin{example}
Consider again the bridge structure as indicated in Figure~\ref{fig:bs}. By identifying the variables $x_1,\ldots,x_5$ in $h(x_1,\ldots,x_5)$, we immediately obtain
$$
h(x) ~=~ 2x^2+2x^3-5x^4+2x^5{\,},
$$
from which we can compute
$$
p(x) ~=~ x^5 h(1/x) ~=~ 2-5x+2x^2+2x^3 ~=~  1+5 (x-1)+8 (x-1)^2+2 (x-1)^3{\,},
$$
or equivalently,
$$
p(x+1) ~=~ 1+5x+8x^2+2x^3{\,}.
$$
Using (\ref{eq:ds56}) we then easily obtain $\overline{\mathbf{S}}=\big(1,1,\frac{4}{5},\frac{1}{5},0,0\big)$. Indeed, we have for instance ${5\choose 2}\,\overline{S}_2=8$ and hence $\overline{S}_2=4/5$. Finally, using (\ref{eq:ds56x}) we obtain $\mathbf{s}=\big(0,\frac{1}{5},\frac{3}{5},\frac{1}{5},0\big)$.\qed
\end{example}

This note is organized as follows. In Section 2 we give a proof of our result by first showing a link between the reliability function and the
tail signature through the so-called Bernstein polynomials. In Section 3 we apply our result to the computation of signatures for systems
partitioned into disjoint modules with known signatures.

\section{Notation and main results}

Recall that the $n+1$ \emph{Bernstein polynomials} of degree $n$ are defined on the real line by
$$
b_{k,n}(x)~=~{n\choose k}\, x^k\, (1-x)^{n-k},\qquad k=0,\ldots,n\, .
$$
These polynomials form a basis of the vector space $P_n$ of polynomials of degree at most $n$.

\begin{proposition}\label{prop:bernstein}
We have
\begin{equation}\label{eq:bernstein}
h(x) ~=~ \sum_{k=0}^n \overline{S}_{n-k}\, b_{k,n}(x).
\end{equation}
Thus, the numbers $\overline{S}_{n-k}$ $(k=0,\ldots,n)$ are precisely the components of the diagonal section of the reliability function $h$ in
the basis formed by the Bernstein polynomials of degree $n$.
\end{proposition}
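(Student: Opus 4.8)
The plan is to expand the diagonal section $h(x)$ of the reliability function in the Bernstein basis and identify the coefficients. Since the Bernstein polynomials $b_{k,n}$ form a basis of $P_n$, there are unique reals $\beta_k$ with $h(x)=\sum_{k=0}^n\beta_k\,b_{k,n}(x)$, and the claim is that $\beta_k=\overline{S}_{n-k}$. First I would start from the multilinear expression $h(x_1,\ldots,x_n)=\sum_{A\subseteq[n]}c(A)\prod_{i\in A}x_i$ and take the diagonal section: since $\prod_{i\in A}x=x^{|A|}$, collecting terms by cardinality gives $h(x)=\sum_{j=0}^n\big(\sum_{|A|=j}c(A)\big)x^j$. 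Thus $h(x)$ is the univariate polynomial whose coefficient of $x^j$ is the sum of the multilinear coefficients over all $j$-subsets.

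Next I would compute the Bernstein coefficients of the single monomial power functions and compare. A clean route is to recall (or derive in one line from the binomial theorem applied to $x=x\cdot 1$, writing $1=x+(1-x)$) that $x^j=\sum_{k=j}^n \binom{k}{j}\big/\binom{n}{j}\; b_{k,n}(x)$, equivalently that the monomial $x^j$ has Bernstein coefficient $\binom{k}{j}/\binom{n}{j}$ in degree $n$. Substituting this into $h(x)=\sum_j\big(\sum_{|A|=j}c(A)\big)x^j$ and swapping the order of summation, the coefficient of $b_{k,n}(x)$ becomes
\[
\beta_k ~=~ \sum_{j=0}^k \frac{\binom{k}{j}}{\binom{n}{j}}\sum_{|A|=j}c(A)\, .
\]
It remains to show this equals $\overline{S}_{n-k}$. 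By the Möbius relation $\phi(A)=\sum_{B\subseteq A}c(B)$ and formula (\ref{6w7er5}), $\overline{S}_{n-k}=\binom{n}{k}^{-1}\sum_{|A|=k}\phi(A)=\binom{n}{k}^{-1}\sum_{|A|=k}\sum_{B\subseteq A}c(B)$. For a fixed $B$ with $|B|=j\le k$, the number of $k$-subsets $A$ of $[n]$ containing $B$ is $\binom{n-j}{k-j}$, so $\overline{S}_{n-k}=\binom{n}{k}^{-1}\sum_{j=0}^k\binom{n-j}{k-j}\sum_{|B|=j}c(B)$, and the elementary identity $\binom{n}{k}^{-1}\binom{n-j}{k-j}=\binom{k}{j}\binom{n}{j}^{-1}$ finishes the matching with $\beta_k$.

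The main obstacle is really just bookkeeping: making sure the index reflection $k\leftrightarrow n-k$ between "number of failed components" ($\overline{S}_k$, associated with sets of size $n-k$) and the Bernstein degree index is handled consistently, and verifying the two small binomial identities ($x^j=\sum_{k\ge j}\binom{k}{j}/\binom{n}{j}\,b_{k,n}$ and $\binom{n-j}{k-j}/\binom{n}{k}=\binom{k}{j}/\binom{n}{j}$). Neither is deep. An alternative, perhaps slicker, proof avoids the Möbius coefficients entirely: apply the inverse of the Bernstein-coefficient map directly, using the known formula expressing Bernstein coefficients of a polynomial $q$ of degree $\le n$ in terms of finite differences of $q$ evaluated at the nodes $i/n$ — but here the cleanest statement comes instead from the forward computation above, or equivalently from recognizing that $p(x)=x^n h(1/x)$ has $p(x)=\sum_k \overline{S}_{n-k}\binom{n}{k}x^{n-k}(x-1)^k$ after the substitution $x\mapsto 1/x$ in (\ref{eq:bernstein}), which is exactly the reformulation used in Theorem~\ref{thm:main}. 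I would present the forward expansion as the primary argument since it is self-contained given only the binomial theorem and the Möbius inversion already recalled in the text.
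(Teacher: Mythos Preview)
Your argument is correct, but it takes a longer road than the paper's. You start from the monomial expansion $h(\bfx)=\sum_A c(A)\prod_{i\in A}x_i$, restrict to the diagonal, convert each power $x^j$ into the Bernstein basis, and then undo the M\"obius inversion to recover sums of $\phi$-values. The paper instead begins from the alternative representation
\[
h(\bfx)=\sum_{A\subseteq [n]}\phi(A)\,\prod_{i\in A}x_i\,\prod_{i\in [n]\setminus A}(1-x_i),
\]
which on the diagonal immediately yields $h(x)=\sum_{k=0}^n\big(\sum_{|A|=k}\phi(A)\big)\,x^k(1-x)^{n-k}$; comparing with (\ref{6w7er5}) then finishes in one line. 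The difference is that the paper exploits the ``value'' form of the multilinear extension (indexed by $\phi(A)$), whereas you work with the ``coefficient'' form (indexed by $c(A)$) and must pass back through M\"obius inversion and the binomial identity $\binom{n-j}{k-j}/\binom{n}{k}=\binom{k}{j}/\binom{n}{j}$. Both routes are elementary, but the paper's is essentially a single substitution, while yours requires two auxiliary identities and a double summation swap. If you want to keep your approach, it is fine; but noting the value-form expansion of $h$ would collapse your proof to two lines.
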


\begin{proof}
The reliability function can be expressed as
$$
h(\bfx) ~=~ \sum_{A\subseteq [n]}\phi(A)\,\prod_{i\in A}x_i\,\prod_{i\in [n]\setminus A}(1-x_i)\, .
$$
Its diagonal section is then given by
$$
h(x) ~=~ \sum_{A\subseteq [n]}\phi(A)\, x^{|A|}\,(1-x_i)^{n-|A|} ~=~ \sum_{k=0}^n \,\bigg(\sum_{|A|=k}\phi(A)\bigg)\, x^k\,(1-x)^{n-k}
$$
and we immediately conclude by (\ref{6w7er5}).
\end{proof}

By applying the classical transformations between power and Bernstein polynomial forms to Eq.~(\ref{eq:bernstein}), from the standard form of $h(x)$, namely $h(x)=\sum_{k=0}^n a_k\, x^k$, we immediately obtain
\begin{equation}\label{eq:bernstein2}
\overline{S}_k ~=~ \sum_{i=0}^{n-k}\frac{{n-k\choose i}}{{n\choose i}}\, a_i\quad\mbox{and}\quad a_k ~=~ {n\choose k}\,\sum_{i=0}^k(-1)^{k-i}\,{k\choose i}\,\overline{S}_{n-i}\, ,\qquad k=0,\ldots,n.
\end{equation}

\begin{remark}\label{rem:87fd}
\begin{enumerate}
\item[(a)] Eqs.~(\ref{eq:bernstein}) and (\ref{eq:bernstein2}) explicitly show that $h(x)$ encodes exactly the signature, no more, no less. This means that two $n$-component systems having the same $h(x)$ also have the same signature and two $n$-component systems having the same signature also have the same $h(x)$.

It is also noteworthy that two distinct $n$-component systems may have the same $h(x)$, and hence the same signature. For instance, the $8$-component system defined by the structure
$$
\phi_1(\bfx) ~=~ x_1\, x_2\amalg x_2\, x_3\, x_4\amalg x_5\, x_6\, x_7\, x_8
$$
has the same $h(x)$ as the $8$-component system defined by the structure
$$
\phi_2(\bfx) ~=~ x_1\, x_3\amalg x_2\, x_4\, x_5\amalg x_1\, x_2\, x_6\, x_7\, x_8\, ,
$$
namely $h(x)=x^2+x^3-x^6-x^7+x^8$.

\item[(b)] Eq.~(\ref{eq:bernstein}) also shows that $\overline{S}_k$ is the component of $h(x)$ along the basis polynomial $b_{n-k,n}$.
Interestingly, by replacing $x$ by $1-x$ in (\ref{eq:bernstein}), we obtain the following (dual) basis decomposition
$$
h(1-x) ~=~ \sum_{k=0}^n \overline{S}_{n-k}\, b_{n-k,n}(x)~=~ \sum_{k=0}^n \overline{S}_{k}\, b_{k,n}(x)\, .
$$
\item[(c)] Using summation by parts in Eq.~(\ref{eq:bernstein}), we derive the following identity
$$
h(x) ~=~ \sum_{k=1}^ns_k\, h_{\mathrm{os}_k}(x)\, ,
$$
where $h_{\mathrm{os}_k}(x)=\sum_{i=n-k+1}^nb_{i,n}(x)$ is the diagonal section of the reliability function of the $(n-k+1)$-out-of-$n$ system (the structure $\mathrm{os}_k(\bfx)$ being the $k$th smallest variable $x_{k:n}$). By (\ref{eq:sf6f}) we see that
this identity is nothing other than the classical signature-based expression of the system reliability (see, e.g., \cite{Sam07}), that is,
$$
\Pr(T_S>t) ~=~ \sum_{k=1}^ns_k\, \Pr(T_{k:n}>t)\, .
$$
\end{enumerate}
\end{remark}

We can now state and prove our main result. Let $f$ be a univariate polynomial of degree $m\leqslant n$,
$$
f(x)~=~a_n\, x^n+\cdots + a_1\, x+ a_0\, .
$$
The $n$-\emph{reflected polynomial} of $f$ is the polynomial $f^R$ defined by
$$
f^R(x)~=~a_0\, x^n+a_1\, x^{n-1}+\cdots + a_n\, ,
$$
or equivalently, $f^R(x)=x^n\, f(1/x)$.

\begin{theorem}\label{thm:main}
We have
$$
h^R(x) ~=~ \sum_{k=0}^n {n\choose k}\, \overline{S}_k\, (x-1)^k.
$$
Thus, for every $k\in\{0,\ldots,n\}$, the number ${n\choose k}\, \overline{S}_k$ is precisely the coefficient of $(x-1)^k$ of the Taylor
expansion about $x=1$ of the $n$-reflected diagonal section of the reliability function $h$.
\end{theorem}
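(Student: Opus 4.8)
The plan is to obtain the formula directly from the Bernstein-basis decomposition of $h(x)$ supplied by Proposition~\ref{prop:bernstein}, by applying to it the $n$-reflection operation. The first thing to record is that $h(x)$, being the diagonal section of the multilinear function $\hat\phi = h$, has degree at most $n$; hence it lies in $P_n$, its $n$-reflected polynomial $h^R(x) = x^n h(1/x)$ is a genuine polynomial, and, with $n$ held fixed, the map $f \mapsto f^R$ is linear on $P_n$ (it just reverses the coefficient list $a_0,\ldots,a_n$).

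The one computation to carry out is the $n$-reflection of a single Bernstein polynomial. From $b_{k,n}(x) = \binom{n}{k} x^k (1-x)^{n-k}$ one gets
$$
b_{k,n}^R(x) ~=~ x^n\, b_{k,n}(1/x) ~=~ \binom{n}{k}\, x^n \Big(\frac1x\Big)^k \Big(1-\frac1x\Big)^{n-k} ~=~ \binom{n}{k}\, x^{n-k}\,\frac{(x-1)^{n-k}}{x^{n-k}} ~=~ \binom{n}{k}\,(x-1)^{n-k}\, ,
$$
the cancellation being legitimate precisely because $b_{k,n}$ has degree $n$, so that multiplying by $x^n$ clears every negative power. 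In other words, $n$-reflection carries the Bernstein basis $(b_{k,n})_{k=0}^n$ of $P_n$ onto the shifted-power basis $\big(\binom{n}{k}(x-1)^{n-k}\big)_{k=0}^n$.

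Applying the linear map $f \mapsto f^R$ termwise to $h(x) = \sum_{k=0}^n \overline{S}_{n-k}\, b_{k,n}(x)$ then yields
$$
h^R(x) ~=~ \sum_{k=0}^n \overline{S}_{n-k}\, b_{k,n}^R(x) ~=~ \sum_{k=0}^n \binom{n}{k}\,\overline{S}_{n-k}\,(x-1)^{n-k}\, ,
$$
and the change of index $j = n-k$, together with $\binom{n}{k} = \binom{n}{n-k}$, turns the right-hand side into $\sum_{j=0}^n \binom{n}{j}\,\overline{S}_j\,(x-1)^j$, which is the asserted identity. The statement about Taylor coefficients then follows for free: a polynomial has a unique expansion in powers of $(x-1)$, so this expansion coincides with its Taylor expansion about $x=1$, and reading off $D^k h^R(1) = k!\,\binom{n}{k}\,\overline{S}_k$ recovers Eq.~(\ref{eq:ds56}) with $p = h^R$.

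I do not anticipate a genuine obstacle here; the only point requiring a moment's care is the degree bookkeeping — knowing $\deg h(x) \leqslant n$ is what makes $h^R$ a polynomial and what validates the $x^{n-k}$ cancellation in $b_{k,n}^R$ — and this is immediate from the multilinearity of $\hat\phi$. A slightly longer route would bypass Proposition~\ref{prop:bernstein} altogether: write $h(x) = \sum_k a_k x^k$, so that $h^R(x) = \sum_k a_k x^{n-k}$, substitute $x = 1+(x-1)$, expand by the binomial theorem, and identify the coefficient of $(x-1)^j$ as $\sum_i \binom{n-i}{j}\,a_i$; the elementary identity $\binom{n}{j}\binom{n-j}{i} = \binom{n}{i}\binom{n-i}{j}$ then shows this equals $\binom{n}{j}\,\overline{S}_j$ through the first relation in (\ref{eq:bernstein2}). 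The Bernstein-basis argument is, however, shorter and makes the structural meaning transparent.
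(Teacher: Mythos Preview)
Your proof is correct and follows essentially the same approach as the paper: invoke Proposition~\ref{prop:bernstein} and then $n$-reflect term by term. The paper's proof is terser---it just says ``the result then follows by reflecting this polynomial''---whereas you spell out the key computation $b_{k,n}^R(x)=\binom{n}{k}(x-1)^{n-k}$ that makes this work.
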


\begin{proof}
By Proposition~\ref{prop:bernstein}, we have
$$
h(x) ~=~ \sum_{k=0}^n \overline{S}_{k}\, b_{n-k,n}(x).
$$
The result then follows by reflecting this polynomial.
\end{proof}

From Theorem~\ref{thm:main} we immediately derive the following algorithm, which inputs both the number $n$ of components and the reliability
function $h$ and outputs the signature $\mathbf{s}$ of the system.

\smallskip

\begin{quotation}
\begin{enumerate}
\item[\textbf{Step 1.}] Express the $n$-reflected polynomial $h^R(x)=x^n\, h(1/x)$ in the basis $\{(x-1)^k:k=0,\ldots,n\}$ or, equivalently, the polynomial
$h^R(x+1)$ in the basis $\{x^k:k=0,\ldots,n\}$. That is,
$$
h^R(x)~=~\sum_{k=0}^n c_k\, (x-1)^k\qquad\mbox{or}\qquad h^R(x+1)~=~\sum_{k=0}^n c_k\, x^k\, .
$$


\item[\textbf{Step 2.}] Compute the tail signature $\overline{\mathbf{S}}\,$:
$$
\overline{S}_k ~=~ \textstyle{c_k/{n\choose k}}\, ,\qquad k=0,\ldots,n.
$$

\item[\textbf{Step 3.}] Compute the signature $\mathbf{s}\,$:
$$
s_k ~=~ \overline{S}_{k-1}-\overline{S}_k\, ,\qquad k=1,\ldots,n.
$$
\end{enumerate}
\end{quotation}

\smallskip


\begin{remark}
The concept of signature was recently extended to the general non-i.i.d.\ case (see, e.g., \cite{MarMatb}). In fact, assuming only that ties have null probability (i.e., $\Pr(T_i=T_j)=0$ for $i\neq j$), we can define the \emph{probability signature} of the system as the $n$-tuple $\mathbf{p}=(p_1,\ldots,p_n)$, where $p_k=\Pr(T_S=T_{k:n})$. This $n$-tuple may depend on both the structure and the distribution of lifetimes. It was proved \cite{MarMatb} that in general we have
\begin{equation}\label{eq:c8v76c}
\sum_{i=k+1}^n\Pr(T_S=T_{i:n})~=~\sum_{|A|=n-k}q(A)\,\phi(A){\,},
\end{equation}
where the function $q\colon 2^{[n]}\to \R$, called the relative quality function associated with the system, is defined by $q(A)=\Pr(\max_{i\in [n]\setminus A}T_i<\min_{i\in A}T_i)$.

Clearly, the right-hand side of (\ref{eq:c8v76c}) coincides with that of (\ref{6w7er5}) for every semicoherent system when $q(A)=1/{n\choose |A|}$ for every $A\subseteq [n]$ (see \cite{MarMatc} for more details).
Therefore the algorithm above can be applied to the non-i.i.d.\ case whenever this condition holds, for instance when the lifetimes are exchangeable.
\end{remark}

An $n$-component semicoherent system is said to be \emph{coherent} if it has only relevant components, i.e., for every $k\in [n]$ there exists
$\bfx\in\{0,1\}^n$ such that $\phi(0_k,\bfx)\neq\phi(1_k,\bfx)$, where $\phi(z_k,\bfx)=\phi(\bfx)|_{x_k=z}$.

The following proposition gives sufficient conditions on the signature for a semicoherent system to be coherent.

\begin{proposition}\label{prop:8g6}
Let $([n],\phi)$ be an $n$-component semicoherent system with continuous i.i.d.\ component lifetimes. Then the following assertions are equivalent.
\begin{enumerate}
\item[$(i)$] The reliability function $h$ is a polynomial of degree $n$ (equivalently, $h(x)$ is a polynomial of degree $n$).

\item[$(ii)$] We have
$$
\sum_{\mbox{$k$ odd}} {n\choose k}\,\overline{S}_k ~\neq ~\sum_{\mbox{$k$ even}} {n\choose k}\,\overline{S}_k\, .
$$

\item[$(iii)$] We have
$$
\sum_{\mbox{$k$ odd}} {n-1\choose k-1}\, s_k~\neq ~\sum_{\mbox{$k$ even}} {n-1\choose k-1}\, s_k\, .
$$
\end{enumerate}
If any of these conditions is satisfied, then the system is coherent.
\end{proposition}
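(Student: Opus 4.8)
The plan is to reduce assertion $(i)$ to a statement about the leading coefficient of the diagonal section $h(x)$ and then translate this statement successively into $(ii)$ and $(iii)$; the implication to coherence is then almost immediate. Write $h(x)=\sum_{k=0}^n a_k\,x^k$. Since $h(\bfx)=\sum_{A\subseteq[n]}c(A)\prod_{i\in A}x_i$ is multilinear, its diagonal section is $h(x)=\sum_{k=0}^n\big(\sum_{|A|=k}c(A)\big)\,x^k$, so that $a_n=c([n])$, the coefficient of $x_1\cdots x_n$ in $h(\bfx)$. Hence $h(\bfx)$ has total degree $n$ iff $c([n])\neq0$ iff $a_n\neq0$ iff $h(x)$ has degree $n$, which settles the parenthetical equivalence in $(i)$ and shows that $(i)$ is exactly the condition $a_n\neq0$.

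Next I would prove $(i)\Leftrightarrow(ii)$ by evaluating the $n$-reflected polynomial at $0$. Because $h(x)$ has degree at most $n$, we have $h^R(x)=x^n h(1/x)=a_0\,x^n+a_1\,x^{n-1}+\cdots+a_n$, so $a_n=h^R(0)$. Substituting $x=0$ into the identity of Theorem~\ref{thm:main} gives $h^R(0)=\sum_{k=0}^n(-1)^k\binom{n}{k}\overline S_k$, hence $a_n=\sum_{k=0}^n(-1)^k\binom{n}{k}\overline S_k$. Thus $a_n\neq0$ is precisely the statement that $\sum_{k\text{ even}}\binom nk\overline S_k\neq\sum_{k\text{ odd}}\binom nk\overline S_k$, i.e.\ $(ii)$.

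For $(ii)\Leftrightarrow(iii)$ I would substitute $\overline S_k=\sum_{i=k+1}^n s_i$ into $\sum_{k=0}^n(-1)^k\binom nk\overline S_k$, interchange the order of summation, and use the elementary partial-sum identity $\sum_{k=0}^{m}(-1)^k\binom nk=(-1)^m\binom{n-1}{m}$ to obtain $\sum_{k=0}^n(-1)^k\binom nk\overline S_k=-\sum_{i=1}^n(-1)^i\binom{n-1}{i-1}s_i$. Since this number is nonzero precisely when $\sum_{i\text{ odd}}\binom{n-1}{i-1}s_i\neq\sum_{i\text{ even}}\binom{n-1}{i-1}s_i$, the equivalence with $(iii)$ follows. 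This index-and-sign bookkeeping, together with recalling the partial-sum identity, is the only place in the argument where some care is needed; I do not expect a genuine obstacle.

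Finally, for the coherence claim assume $(i)$, i.e.\ $c([n])\neq0$. In the multilinear form $h(\bfx)=\sum_{A\subseteq[n]}c(A)\prod_{i\in A}x_i$ distinct subsets contribute distinct monomials, so the monomial $x_1\cdots x_n$ cannot be cancelled, and consequently the $k$th partial derivative $\partial_k h$ is a nonzero multilinear polynomial (it still contains the term $c([n])\prod_{i\neq k}x_i$), hence nonzero at some point of $\{0,1\}^{n-1}$. Since $h=\hat\phi$ restricts to $\phi$ on Boolean vectors and $\partial_k\hat\phi=\hat\phi(1_k,\cdot)-\hat\phi(0_k,\cdot)$, this yields $\phi(0_k,\bfx)\neq\phi(1_k,\bfx)$ for some $\bfx$. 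Thus every component $k$ is relevant, so the system is coherent.
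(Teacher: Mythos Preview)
Your proof is correct and follows essentially the same route as the paper: both obtain $(i)\Leftrightarrow(ii)$ from Theorem~\ref{thm:main} via $h^R(0)=a_n$, both reduce $(ii)\Leftrightarrow(iii)$ to the identity $\sum_{k=0}^n(-1)^k\binom nk\overline S_k=\sum_{k=1}^n(-1)^{k-1}\binom{n-1}{k-1}s_k$ (you actually derive it, the paper merely states it), and both deduce coherence from the degree-$n$ condition. Your coherence argument is the direct contrapositive of the paper's (the paper notes that an irrelevant component forces $h$ to depend on fewer than $n$ variables, hence $\deg h<n$; you show $c([n])\neq0$ forces $\partial_k h\not\equiv0$), but the content is the same.
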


\begin{proof}
The equivalence $(i)\Leftrightarrow (ii)$ immediately follows from Theorem~\ref{thm:main} and the fact that $h(x)$ is of degree $n$ if and only
if $h^R(0)\neq 0$.

The equivalence $(ii)\Leftrightarrow (iii)$ follows from the straightforward identity
$$
\sum_{k=0}^n{n\choose k}\,\overline{S}_k\, (-1)^k ~=~ \sum_{k=1}^n {n-1\choose k-1}\, s_k\, (-1)^{k-1}\, .
$$

To see that the system is coherent when condition $(i)$ is satisfied, suppose that component $k$ is irrelevant. Then $h(\bfx)=h(1_k,\bfx)$ has
less than $n$ variables and therefore cannot be of degree $n$.
\end{proof}

\begin{remark}\label{rem:sdf876}
\begin{enumerate}
\item[(a)] The equivalent conditions in Proposition~\ref{prop:8g6} are not necessary for a semicoherent system to be coherent. For instance, the $4$-component coherent system defined by the structure
$$
\phi(\bfx) ~=~ x_1\, x_2\amalg x_2\, x_3\amalg x_3\, x_4 ~=~ x_1 x_2 + x_2 x_3  + x_3 x_4 - x_1 x_2 x_3 - x_2 x_3 x_4
$$
has a reliability function of degree $3$.

\item[(b)] The $6$-component coherent system defined by the structure
$$
\phi_1(\bfx) ~=~ x_1\, x_2\amalg x_2\, x_3\, x_4\amalg x_3\, x_4\, x_5\, x_6
$$
has the same $h(x)$ as the $5$-component coherent system (or $6$-component noncoherent system) defined by the structure
$$
\phi_2(\bfx) ~=~ x_1\, x_3\amalg x_2\, x_4\, x_5\, ,
$$
namely $h(x)=x^2+x^3-x^5$. We thus retrieve the fact that $h(x)$ does not characterize the system (see Remark~\ref{rem:87fd}(a)) and cannot determine whether or not the system is coherent (see Remark~\ref{rem:sdf876}(a)).
\end{enumerate}
\end{remark}

\section{Application: Modular decomposition of system signatures}

We now apply our main result to show that (and how) the signature of a system partitioned into disjoint modules can be computed only from the
partition structure and the module signatures.

Suppose that the system is partitioned into $r$ disjoint semicoherent modules $(A_j,\chi_j)$ $(j=1,\ldots,r$), where $A_j$ represents the set of the
components in module $j$ and $\chi_j\colon\{0,1\}^{A_j}\to\{0,1\}$ is the corresponding structure function. Let $n_j$ denote the number of
components in $A_j$ (hence $\sum_{j=1}^r n_j=n$) and let $\overline{\mathbf{S}}_j=(\overline{S}_{j,0},\ldots,\overline{S}_{j,n_j})$ denote the
tail signature of module $j$.

If $\psi\colon\{0,1\}^r\to\{0,1\}$ is the structure function of the partition of the system into modules, the modular decomposition of the
structure $\phi$ of the system expresses through the composition
$$
\phi(\bfx) ~=~ \psi\big(\chi_1(\bfx^{A_1}),\ldots,\chi_r(\bfx^{A_r})\big)\, ,
$$
where $\bfx^{A_j}=(x_i)_{i\in A_j}$ (see \cite[Chap.~1]{BarPro81}). Since the modules are disjoint, this
composition extends to the reliability functions $h_{\phi}$, $h_{\psi}$, and $h_{\chi_j}$ of the structures $\phi$, $\psi$, and $\chi_j$,
respectively; that is,
\begin{equation}\label{eq:asd46}
h_{\phi}(\bfx) ~=~ h_{\psi}\big(h_{\chi_1}(\bfx^{A_1}),\ldots,h_{\chi_r}(\bfx^{A_r})\big)\, .
\end{equation}
Indeed, the right-hand side of (\ref{eq:asd46}) contains no powers and hence is a multilinear polynomial.

According to Theorem~\ref{thm:main}, the tail signature of the system can be computed directly from the function
\begin{equation}\label{eq:asd1}
h_{\phi}^R(x) ~=~ x^n\,h_{\psi}\big(x^{-n_1}\, h_{\chi_1}^R(x),\ldots,x^{-n_r}\,h_{\chi_r}^R(x)\big)\, ,
\end{equation}
where $h_{\chi_j}^R$ is the $n_j$-reflected of the diagonal section of $h_{\chi_j}$, that is,
\begin{equation}\label{eq:asd2}
h_{\chi_j}^R(x) ~=~ \sum_{k=0}^{n_j} {n_j\choose k}\, \overline{S}_{j,k}\, (x-1)^k.
\end{equation}

Interestingly, Eqs.~(\ref{eq:asd1}) and (\ref{eq:asd2}) show that (and how) the signature of the system can be computed only from the structure
$\psi$ and the signature of every module. Thus, the complete knowledge of the structures $\chi_1,\ldots,\chi_r$ is not needed in the computation
of the signature of the whole system.

\begin{example}
Consider a $7$-component system consisting of two serially connected modules (hence $\psi(z_1,z_2)=z_1z_2$) with signatures
$\mathbf{s}_1=\big(\frac{1}{3},\frac{2}{3},0\big)$ and $\mathbf{s}_2=\big(0,\frac{2}{3},\frac{1}{3},0\big)$, respectively. By (\ref{eq:asd2}) we
have
$$
h_{\chi_1}^R(x)=2x-1\quad\mbox{and}\quad h_{\chi_2}^R(x)=2x^2-1\, .
$$
By (\ref{eq:asd1}) we then obtain
$$
h_{\phi}^R(x) ~=~ x^7\big(x^{-3}(2x-1)\,x^{-4}(2x^2-1)\big) ~=~ 1-2x-2x^2+4x^3\, ,
$$
from which we derive the system signature $\mathbf{s}=\big(\frac{1}{7},\frac{8}{21},\frac{38}{105},\frac{4}{35},0,0,0\big)$.\qed
\end{example}

As an immediate consequence of our analysis we retrieve the fact (already observed in \cite{MarMatSpi}; see \cite{DaZheHu12,GerShpSpi11} for
earlier references) that the signature always decomposes through modular partitions.\footnote{This feature reveals an analogy for instance with
the barycentric property of mean values: The arithmetic (geometric, harmonic, etc.) mean of $n$ real numbers does not change when one modifies
some of the numbers without changing their arithmetic (geometric, harmonic, etc.) mean.} We state this property as follows.


\begin{theorem}
The signature of a system partitioned into disjoint modules does not change when one modifies the modules without changing their signatures.
\end{theorem}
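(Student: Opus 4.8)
The plan is to show that the signature of the whole system, as computed by the algorithm following Theorem~\ref{thm:main}, depends on the modules only through their signatures (equivalently, through their tail signatures or their reflected diagonal sections), and not through any finer information about the structures $\chi_j$. This is essentially already contained in Eqs.~(\ref{eq:asd1}) and (\ref{eq:asd2}), so the argument is short; I would simply assemble the pieces into a clean statement.

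First I would recall the setup: the system is partitioned into disjoint semicoherent modules $(A_j,\chi_j)$ with $|A_j|=n_j$ and $\sum_j n_j=n$, organized by an outer structure $\psi$, so that the modular decomposition $\phi(\bfx)=\psi(\chi_1(\bfx^{A_1}),\ldots,\chi_r(\bfx^{A_r}))$ holds and, because the modules are on disjoint component sets, lifts to the reliability functions as in (\ref{eq:asd46}). Then I would invoke Theorem~\ref{thm:main}: the tail signature $\overline{\mathbf S}$ of the whole system, and hence its signature $\mathbf s$ via (\ref{eq:ds56x}), is read off from the coefficients of $h_\phi^R(x)$ in the basis $\{(x-1)^k\}$. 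The key observation is the identity (\ref{eq:asd1}),
$$
h_{\phi}^R(x) ~=~ x^n\,h_{\psi}\big(x^{-n_1}\, h_{\chi_1}^R(x),\ldots,x^{-n_r}\,h_{\chi_r}^R(x)\big)\, ,
$$
together with (\ref{eq:asd2}), which expresses each $h_{\chi_j}^R$ purely in terms of the tail signature $\overline{\mathbf S}_j$ of module $j$. Consequently $h_\phi^R$, and therefore $\overline{\mathbf S}$ and $\mathbf s$, is a function of $\psi$ and of $(\overline{\mathbf S}_1,\ldots,\overline{\mathbf S}_r)$ alone.

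To finish, I would argue the reduction: if we replace each module $(A_j,\chi_j)$ by another semicoherent module $(A_j,\chi_j')$ on the same component set with the same signature $\mathbf s_j$, then by Remark~\ref{rem:87fd}(a) (the signature and $h_{\chi_j}(x)$ determine each other) the diagonal sections $h_{\chi_j}(x)$ are unchanged, hence so are the reflected polynomials $h_{\chi_j}^R(x)$; since $\psi$ and $n$ are also unchanged, (\ref{eq:asd1}) yields the same $h_\phi^R(x)$, and Theorem~\ref{thm:main} then gives the same tail signature and the same signature $\mathbf s$ for the modified system. One should note that this covers the case of a single modification since modifying several modules can be done one at a time. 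I expect no serious obstacle here; the only point requiring a little care is making explicit that (\ref{eq:asd1}) genuinely uses the modules only through the univariate polynomials $h_{\chi_j}^R$ — i.e., that the composition in (\ref{eq:asd46}) respects diagonal sections, which holds because substituting $x$ for every variable in $\bfx^{A_j}$ feeds the scalar $h_{\chi_j}(x)$ into the $j$th argument of $h_\psi$ — and that this scalar is the degree-$n_j$ diagonal polynomial whose reflection is (\ref{eq:asd2}).
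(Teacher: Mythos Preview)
Your proposal is correct and follows exactly the route the paper indicates: the theorem is stated there without a separate proof, as ``an immediate consequence of our analysis,'' meaning precisely that Eqs.~(\ref{eq:asd1}) and (\ref{eq:asd2}) together with Theorem~\ref{thm:main} show that $h_\phi^R$---and hence the signature---depends on the modules only through their tail signatures. You have simply written out this implication in full, which is appropriate.
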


A \emph{recurrent system} is a system partitioned into identical modules. Thus, for any recurrent system we have $n_1=\cdots = n_r=n/r$ and
$\chi_1=\cdots =\chi_r=\chi$. Eq.~(\ref{eq:asd1}) then reduces to
\begin{equation}\label{eq:dfg6}
h_{\phi}^R(x) ~=~ x^n\,h_{\psi}\big(x^{-n/r}\,h_{\chi}^R(x)\big) ~=~ h_{\chi}^R(x)^r\, h_{\psi}^R\big(x^{n/r}\, h_{\chi}^R(x)^{-1}\big)\, .
\end{equation}
Thus, to compute the tail signature $\overline{\mathbf{S}}$ of the whole system the knowledge of the structures $\psi$ and $\chi$ can be simply
replaced by the knowledge of their corresponding tail signatures $\overline{\mathbf{S}}_{\psi}$ and $\overline{\mathbf{S}}_{\chi}$,
respectively.

\begin{example}
Consider a system partitioned into $r$ modules, each of whose consists of two components connected in parallel (system with componentwise
redundancy). In this case we have $h_{\chi}^R(x)=2x-1$ and
$$
h_{\psi}^R(x) ~=~ \sum_{k=0}^r{r\choose k}\,\overline{S}_{\psi,k}\, (x-1)^k\, .
$$
By (\ref{eq:dfg6}) it follows that
$$
h_{\phi}^R(x+1) ~=~ \sum_{k=0}^r{r\choose k}\,\overline{S}_{\psi,k}\, x^{2k}\, (2x+1)^{r-k}\, ,
$$
from which we derive
$$
\overline{S}_{\ell} ~=~ \sum_{k=\max({\ell}-r,0)}^{\lfloor\ell/2\rfloor}\frac{{r\choose k}{r-k\choose\ell -2k}}{{2r\choose\ell}}\,2^{\ell
-2k}\,\overline{S}_{\psi,k}{\,},\qquad 0\leqslant\ell\leqslant 2r{\,}.
$$
\end{example}

\section*{Acknowledgments}

The authors wish to thank Miguel Couceiro for fruitful discussions. This research is partly supported by the internal research project F1R-MTH-PUL-12RDO2 of the University of Luxembourg.


\begin{thebibliography}{99}

\bibitem{BarPro75}
R.E.~Barlow and F.~Proschan.
\newblock Importance of system components and fault tree events.
\newblock {\em Stochastic Processes and Their Applications}, 3:153--172, 1975.

\bibitem{BarPro81}
R.E.~Barlow and F.~Proschan.
\newblock {\em Statistical theory of reliability and life testing}.
\newblock To Begin With, Silver Spring, MD, 1981.

\bibitem{Bol01}
P.J.~Boland.
\newblock Signatures of indirect majority systems.
\newblock {\em J. Appl. Prob.}, 38:597--603, 2001.

\bibitem{DaZheHu12}
G.~Da, B.~Zheng, and T.~Hu.
\newblock On computing signatures of coherent systems.
\newblock {\em J. Multivariate Anal.}, 103(1):142--150, 2012.

\bibitem{GerShpSpi11}
I.~Gertsbakh, Y.~Shpungin, and F.~Spizzichino.
\newblock Signatures of coherent systems built with separate modules.
\newblock {\em J. Appl. Probab.}, 48(3):843--855, 2011.

\bibitem{MarMatb}
J.-L.~Marichal and P.~Mathonet.
\newblock {Extensions of system signatures to dependent lifetimes: explicit expressions and interpretations}.
\newblock {\em J.\ of Multivariate Analysis}, 102(5):931--936, 2011.

\bibitem{MarMat}
J.-L.~Marichal and P.~Mathonet.
\newblock On the extensions of Barlow-Proschan importance index and system signature to dependent lifetimes.
\newblock {\em J.\ of Multivariate Analysis}, to appear. 

\bibitem{MarMatSpi}
J.-L.~Marichal, P.~Mathonet, and F.~Spizzichino.
\newblock On modular decompositions of system signatures.
\newblock \verb"arxiv.org/abs/1208.5658"

\bibitem{MarMatc}
J.-L.~Marichal, P.~Mathonet, and T.~Waldhauser.
\newblock {On signature-based expressions of system reliability}.
\newblock {\em J.\ of Multivariate Analysis}, 102(10):1410--1416, 2011.

\bibitem{Owe72}
G.~Owen.
\newblock Multilinear extensions of games.
\newblock {\em Management Sci.}, 18:P64--P79, 1972.

\bibitem{Owe88}
G.~Owen.
\newblock Multilinear extensions of games.
\newblock In: A.E. Roth, editor.
\newblock {\em {The Shapley Value. Essays in Honor of Lloyd S. Shapley}},
    pages 139--151. {Cambridge University Press}, 1988.

\bibitem{Ram90}
K.~G. Ramamurthy.
\newblock {\em Coherent structures and simple games}, volume~6 of {\em Theory
  and Decision Library. Series C: Game Theory, Mathematical Programming and
  Operations Research}.
\newblock Kluwer Academic Publishers Group, Dordrecht, 1990.


\bibitem{Sam85}
F.J.~Samaniego.
\newblock On closure of the IFR class under formation of coherent systems.
\newblock {\em IEEE Trans.\ Reliability Theory}, 34:69--72, 1985.

\bibitem{Sam07}
F.J.~Samaniego.
\newblock {\em {System signatures and their applications in engineering reliability}.}
\newblock {Int. Series in Operations Research \& Management Science, 110. New York: Springer}, 2007.

\bibitem{Sha53}
L.~Shapley.
\newblock {A value for $n$-person games}.
\newblock In {\em {Contributions to the Theory of Games II (Annals of
  Mathematics Studies 28)}}, pages 307--317. {Princeton University Press},
  1953.

\bibitem{ShaShu54}
L.~Shapley and M.~Shubik.
\newblock A method for evaluating the distribution of power in a committee
  system.
\newblock {\em American Political Science Review}, 48:787--792, 1954.

\end{thebibliography}
\end{document}